\theoremstyle{plain}
\newtheorem{theo+}{Theorem}[section]
\newtheorem{prop+}[theo+]{Proposition}
\newtheorem{coro+}[theo+]{Corollary}
\newtheorem{lemm+} [theo+]{Lemma}
\newtheorem{deep+}  [theo+]  {Deep Result}
\newtheorem{fact+}  [theo+]  {Fact}
\theoremstyle{definition}
\newtheorem{exam+}  [theo+]  {Example}
\newtheorem{rema+}  [theo+]  {Remark}
\newtheorem{defi+}  [theo+]  {Definition}
\newtheorem{xca+}[theo+]{Exercise}
\newenvironment{theorem}{\begin{theo+}}{\end{theo+}}
\newenvironment{remark}{\begin{rema+}}{\end{rema+}}
\numberwithin{equation}{section}
\def\fa{\mathfrak a}
\def\fg{\mathfrak g}
\def\fn{\mathfrak n}
\def\fv{\mathfrak v}
\def\fz{\mathfrak z}
\def\norm#1{\Vert#1\Vert}
\def\Norm#1_#2{\Vert#1\Vert_{#2}}
\def\ad{\operatorname{ad}}
\def\Ind{\operatorname{Ind}}
\def\exp{\operatorname{exp}}
\def\SO{\operatorname{SO}}
\def\tr{\operatorname{tr}}
\def\End{\operatorname{End}}
\def\im{\operatorname{Im}}
\def\id{\text{id}}
\def\blank{\,\cdot\,}
\def\1{{\bf 1}}
\def\0{{\bf 0}}
\def\bc{\mathbb C}
\def\br{\mathbb R}
\def\bn{\mathbb N}
\def\bh{\mathbb H}
\def\bo{\mathbb O}
\def\boldf{\mathbb F}
\begin{document}

\title[Invariant differential operators on H-type groups]{Invariant differential operators on H-type groups and discrete components in restrictions of complementary series of rank one semisimple groups}

\author{J. M\"{o}llers}
\address{Department of Mathematics, Aarhus University, Ny Munkegade 118, DK-8000 Aarhus C, Denmark}
\email{\it moellers@imf.au.dk}

\author{B. {\O}rsted}
\address{Department of Mathematics, Aarhus University, Ny Munkegade 118, DK-8000 Aarhus C, Denmark}
\email{\it orsted@imf.au.dk}

\author{G. Zhang}
\address{Mathematical Sciences, Chalmers University of Technology and Mathematical Sciences, G\"oteborg University, SE-412 96 G\"oteborg, Sweden}
\email{\it genkai@chalmers.se}
\thanks{Research by G. Zhang partially supported by the Swedish Science Council (VR)}

\begin{abstract}
We explicitly construct a finite number of discrete components in the restriction of complementary series representations of rank one semisimple groups $G$ to rank one subgroups $G_1$. For this we use the realizations of complementary series representations of $G$ and $G_1$ on Sobolev type spaces on the nilpotent radicals $N$ and $N_1$ of the minimal parabolics in $G$ and $G_1$, respectively. The groups $N$ and $N_1$ are of H-type and we construct explicitly invariant differential operators between $N$ and $N_1$. These operators induce the projections onto the discrete components.\newline
Our construction of the invariant differential operators is carried out uniformly in the framework of H-type groups and also works for those H-type groups which do not occur as nilpotent radical of a parabolic subgroup in a semisimple group.

\medskip\noindent \emph{2010 MSC:} Primary 22E46; Secondary 22E25.

\medskip\noindent \emph{Key words and phrases:} Lie groups,
  complementary series, Sobolev type spaces, invariant
  differential operators.
\end{abstract}

\maketitle

\baselineskip 1.25pc

\section{Introduction}

The study of branching rules is one of the central
topics in representation theory. Given a unitary representation
$\pi$ of a Lie group $G$  and a closed subgroup $G_1\subseteq G$
the objective is to find the decomposition of $\pi$ into
irreducible representations of $G_1$. This decomposition consists in general
of a discrete and a continuous part; for various analytic considerations
it is of importance to find the discrete components in this decomposition.
The probably most studied class of representations in this context is the
discrete series, see e.g. the work of Duflo--Vargas~\cite{DV10},
Gross--Wallach~\cite{GW00}, Kobayashi~\cite{Kob05,Kob08}, {\O}rsted--Vargas~\cite{OV04}, Zhang~\cite{Zha02} and references
therein.

There is another somewhat opposite
class of representations, the complementary
series, which is of substancial interest in 
the spectral theory of locally symmetric spaces, in particular rank-one spaces.
Recently various authors studied discrete components in the restriction
of complementary series of rank one groups to symmetric
subgroups, see the work of Kobayashi--Speh~\cite{KS}, M\"{o}llers--Oshima~\cite{MO}, Speh--Venkataramana~\cite{SV11}, Speh--Zhang~\cite{SZ} and Zhang~\cite{Zha11}. For rank one orthogonal groups
the discrete spectrum is known by \cite{MO} and can explicitly be constructed in
terms of Juhl's covariant differential operators~\cite{Juh09}, see \cite{KS,MO,SZ}.
For the other rank one groups Speh--Zhang~\cite{SZ} found finitely
many discrete components in the restriction,
without providing an explicit construction of these components.

In the present paper we shall find explicit embeddings for all
discrete components given in \cite{SZ}. For this we use the
realization of complementary series representations on certain
function spaces on a nilpotent group $N$, the nilpotent radical of a
minimal parabolic. For rank-one groups the nilpotent groups in
question are H-type groups, a generalization of the Heisenberg
group. We explicitly construct invariant differential restriction
operators between $N$ and the H-type subgroup $N_1\subseteq N$
corresponding to $G_1$. These operators realize the projections onto
the discrete components. Our construction is carried out uniformly in
the framework of H-type groups and even provides invariant
differential restriction operators between complementary series
representations of harmonic extensions of H-type groups studied
earlier in \cite{Cow82, CK84, DZ99}. We remark that in particular the
Heisenberg group $N=\bc^n\ltimes\br$ and its subgroups
$N_1=\bc^m\ltimes\br$ are special cases of our setting.

Let us explain our results in more detail.

\subsection{Complementary series of H-type groups} Let $\fn=\fv +\fz$
be the Lie algebra of an H-type group $N$ with $\fz$ being
the center acting on $\fv$ as a Clifford
module. There are essentially two $N$-invariant
second order differential operators on
$N$, the Kohn sub-Laplacian $\Delta$
defined by vector fields in $\fv$, and the
central Laplacian $\Box$ defined by vector fields in $\fz$.
The complementary series representations $(\pi_\nu,X_\nu)$, the parameter $\nu$ being in a certain interval,
are unitary representations of the harmonic extension $S=N\rtimes A$ of $N$ where $A=\mathbb{R}_+$ acts on $N$ by dilations. They can be realized on certain Sobolev type spaces $X_\nu$ on $N$ with norm given by $u\mapsto\|\Delta^{\frac{\rho-\nu}{2}}u\|_{L^2(N)}$ where $\rho=\frac{1}{2}(\dim\fv+2\dim\fz)$ (note that this is not the $S$-invariant norm but equivalent to it, see Section~\ref{sec:ComplSerOfS} for details).

For an H-type subgroup $N_1\subseteq N$ (see Section~\ref{sec:HtypeSubgroups} for the precise definition) denote by $\Delta_1$ the Kohn sub-Laplacian on its Lie algebra $\fn_1$ and by $(\pi_\nu^\flat,X_\nu^\flat)$ the corresponding complementary series representations of the harmonic extension $S_1=N_1\rtimes A\subseteq S$.

\subsection{Invariant differential operators} In the present paper we
construct a meromorphic family $\mathcal{D}_{\nu,k}$, $\nu\in\bc$,
$k\in\bn_0$, of differential restriction operators $C_c^\infty(N)\to
C_c^\infty(N_1)$ which are polynomial in $\Delta$, $\Delta_1$ and
$\Box$ and intertwine the representations $\pi_\nu$ and
$\pi_{\nu+2k}^\flat$ (see Proposition~\ref{prop:DiffOpOnKsigma} and
Theorem~\ref{thm:IntertwiningDiffOps}). We further show that for
certain parameters $\nu$ and $k$ the operators $\mathcal{D}_{\nu,k}$
are bounded with respect to the respective Sobolev type norms and hence
induce intertwining operators $X_\nu\to X_{\nu+2k}^\flat$ between the
Hilbert spaces (see Theorem~\ref{thm:Boundedness}).

\subsection{Application to branching rules} In the case where $NA$ is
the Iwasawa subgroup of a rank one semisimple Lie group $G$,
i.e. $G=NAK$ with $K\subseteq G$ maximal compact, the complementary
series representations $X_\nu$ of $NA$ extend to irreducible unitary
representations of $G$. Assume that $N_1A$ is the Iwasawa subgroup of
a subgroup $G_1$ of $G$. Then the adjoint operators
$\mathcal{D}_{\nu,k}^*:X_{\nu+2k}^\flat\to X_\nu$ are actually
$G_1$-equivariant isometric embeddings and hence realize $X_{\nu+2k}^\flat$ explicitly as subrepresentations of $X_\nu$ (see Theorem~\ref{thm:BranchingRankOne}). We remark that one can also view this embedding in the dual picture: Since $\mathcal{D}_{\nu,k}$ is a differential restriction operator its dual operator $\mathcal{D}_{\nu,k}':(X_{\nu+2k}^\flat)'\to X_\nu'$ embeds $(X_{\nu+2k}^\flat)'$ as distributions on $N$ supported on the subspace $N_1$.

\subsection{Relation to previous results} In the degenerate case where
$\fn=\br^n$ is abelian and $\fn=\br^{n-1}$ the operators
$\mathcal{D}_{\nu,k}$ were constructed before by Juhl~\cite{Juh09} and
generalized to arbitrary signature in \cite{KOSS} (see
Remark~\ref{rem:JuhlOps} for details). They were used to construct
discrete components in the restriction of complementary series of
$SO(1,n+1)$ to $SO(1,n)$ by Kobayashi--Speh~\cite{KS} and
M\"{o}llers--Oshima~\cite{MO}. 
For other rank one groups the abstract existence of the discrete components in the branching rule was previously established by Speh--Zhang~\cite{SZ} without giving an explicit embedding.
This way of obtaining intertwining operators 
has also been investigated recently; see e.g. \cite{BC12, BSKZ14, KS, Zha}. 
Our  
invariant differential operator is roughly speaking a
meromorphic continuation of a family of operators obtained  
by a composition of some $G_1$-intertwining multiplication operators   
and the Knapp-Stein integral intertwining operator.

\section{Harmonic analysis on H-type groups}

We recall some facts about H-type groups, harmonic extensions and their representation theory. For details we refer the reader to \cite[Section 3]{CH89} and references therein.

\subsection{H-type groups}

Let $\fn$ be a Lie algebra over $\br$ endowed with an inner product
$\langle\cdot,\cdot\rangle$ with corresponding norm $|\cdot|$. Assume
that there exists an orthogonal decomposition $\fn=\fv+\fz$ with
$[\fv,\fv]=\fz$ and $[\fv,\fz]=0$ so that $\fn$ is either abelian or
two-step nilpotent. We denote
$$
p=\dim\fv, \quad q=\dim\fz, \quad
\rho = \frac{p+2q}{2}.
$$
The integer $2\rho$ is also called the homogeneous dimension of
$N$. For every $z\in\fz$ the identity
$$ \langle J_ZX,Y\rangle = \langle Z,[X,Y]\rangle, \qquad X,Y\in\fv, $$
defines a skew-symmetric linear map $J_Z:\fv\to\fv$. The nilpotent Lie algebra $\fn$ and its corresponding nilpotent Lie group $N$ are called of H-type if
$$ J_Z^2 = -16|Z|^2. $$
(Note that in the literature normally the convention $J_Z^2=-|Z|^2$ is used which amounts to a simple scaling of the center. However, for our formulas and calculations the above convention is more convenient.)

We note that this condition can be reformulated in terms of Clifford
modules. In fact the space $\fv$ is a module over the Clifford algebra
$C(\fz)$ associated to $(\fz,\langle\cdot,\cdot\rangle)$. For each $\fz$ there is up to
isomorphism only one irreducible Clifford module $\fv_0$ and hence
$\fv\simeq\fv_0^n$. In Table~\ref{tab:IrrCliffordModules} we list the
dimensions of $\fv_0$. 
However, we will not use this classification in the rest of this article; we only note that for a given $\fv$ one can find all of its
Clifford submodules and thus its H-type subalgebras (see Section~\ref{sec:HtypeSubgroups}) that
we study in the present paper.

\begin{table}[h]
\begin{tabular}
{
|l|l|l|l|l|l|l|l|l|
}
\hline
 $q=\dim \fz$  &
 $8k$ & $8k+1$&
 $8k+2$
&$8k+3$ &$8k+4$ &$8k+5$&  $8k+6$&$8k+7$ 
\\
\hline
 $d=\dim \fv_0$  & $2^{4k}$
 &$2^{4k+1}$  & $2^{4k+2}$ & $2^{4k+2}$  & $2^{4k+3}$  &$2^{4k+3}$  &$2^{4k+3}$ 
&$2^{4k+3}$ 
\\ 
\hline
\end{tabular}
\vskip0.1cm
 \caption{Dimensions of irreducible Clifford modules}\label{tab:IrrCliffordModules}
\end{table}

\begin{exam+}
\begin{enumerate}
\item For $\mathbb{F}=\br,\bc,\bh$ denote by $X\mapsto\overline{X}$ the canonical involution on $\mathbb{F}$. For $X=(X_1,\ldots,X_n)\in\mathbb{F}^n$ we write $X^*=(\overline{X_1},\ldots,\overline{X_n})^t$. We endow $\fv=\mathbb{F}^n$ with the inner product $\langle X,Y\rangle=\Re(XY^*)$. Further, put $\fz=\im\mathbb{F}=\{X\in\mathbb{F}:X+\overline{X}=0\}$, endowed with the restriction of the inner product on $\mathbb{F}$. Then the space $\fn=\fv+\fz=\mathbb{F}^n\oplus\im\mathbb{F}$ turns into an H-type Lie algebra if endowed with the Lie bracket
$$ [(X,Z),(Y,W)] = (0,2(XY^*-YX^*)). $$
The corresponding map $J:\fz\to\End(\fv)$ is given by
$$ J_ZX = -4Z\cdot X. $$
The same construction also works for $\mathbb{F}=\bo$ and $n=1$ where $\fn=\bo\oplus\im\bo$. These H-type groups all appear as nilpotent part in the Iwasawa decomposition of rank one semisimple Lie groups (see Section~\ref{sec:RankOneGroups}). Note that the irreducible Clifford module of $C(\fz)$ is $\fv_0=\mathbb{F}$ in these cases.
\item An H-type group which is not the nilpotent part in the Iwasawa
  decomposition of a semisimple Lie group is given by
  $\fn=\fv\oplus\fz=\bh^{m+n}\oplus\im\bh$,
$m, n \ge 1$, with corresponding map $J:\fz\to\End(\fv)$ given by (cf. \cite[Section 1]{CDKR91})
$$ J_ZX = -4(Z\cdot X_1,\ldots,Z\cdot X_m,X_{m+1}\cdot Z,\ldots,X_{m+n}\cdot Z). $$
\end{enumerate}
\end{exam+}

We identify the group $N$ with its Lie algebra $\fn$ using the group product
$$ (X,Z)\cdot (Y,W) := (X+Y,Z+W+\tfrac{1}{2}[X,Y]), \qquad X,Y\in\fv,Z,W\in\fz. $$
In what follows we use capital letters $(X,Z)$ for an element of the Lie algebra and small letters $n$ for an element of the Lie group. The Haar measure $dn$ on $N$ will be normalized by
$$ \int_N f(n) dn = k(p,q)^{-1} \int_\fv \int_\fz f(X,Z) dZ dX, $$
where $dX$ and $dZ$ are the Lebesgue measures on $\fv$ and $\fz$ normalized by the inner product and
$$ k(p,q) = \frac{\pi^{\frac{p+q+1}{2}}2^{1-p-q}}{\Gamma(\frac{p+q+1}{2})}. $$

We need some basic identities for H-type groups:
\begin{align*}
 |J_ZX|^2 &= \langle J_ZX,J_ZX\rangle = -\langle J_Z^2X,X\rangle = 16|Z|^2|X|^2,\\
 J_{[X,X']}X &= 16|X|^2P_{J_\fz X}X',
\end{align*}
where $P_{J_\fz X}$ denotes the orthogonal projection of $\fv$ onto the subspace $J_\fz X$.

\subsection{Solvable extensions of H-type groups}


Let $A=\mathbb R_+$ act on $N$ by $t\cdot(X,Z)=(tX,t^2Z)$.
Then the semi-direct product $S=N\rtimes A$ is a Lie group with
the group product
$$ (X,Z,t)\cdot(Y,W,s) = (X+tY,Z+t^2W+\frac{1}{2}t[X,Y], t\cdot s) $$
We will write $S=NA$ for short and call $S$ the solvable  extension of
$N$.
$S$ is sometimes also called the harmonic extension as
it is a homogeneous harmonic Riemannian manifold
\cite{Damek-Ricci-bams}.




\subsection{H-type subgroups}\label{sec:HtypeSubgroups}

Let $\fv=\fv_1+\fv_2$ be an orthogonal decomposition such that $[\fv_1,\fv_2]=0$ and $[\fv_1,\fv_1]=\fz$. (Such a $\fv_1$ is sometimes also called a Clifford submodule, see~\cite{CDKR91}.) Then $\fn_1=\fv_1+\fz$ is an H-type subalgebra of $\fn$. Denote by $N_1$ the corresponding subgroup of $N$ and by $S_1=N_1A$ the harmonic extension of $N_1$ which is a closed subgroup of $S=NA$.
We put
$$ p_1=\dim \fv_1, \quad p_2=\dim \fv_2,  \quad \rho_1 = \frac{p_1+2q}{2}.  $$


\subsection{Differential operators}

For $X\in \fn$ the corresponding left-invariant differential operator
will also be denoted by $X$, namely
$$ (Xf)(n)=\frac{d}{dt}(f(n\exp(tX))|_{t=0}. $$
On the Lie algebra level the left-invariant differential operators are given by
\begin{align*}
 Sf(X,Z) &= \partial_Sf(X,Z)-\frac{1}{2}\partial_{[S,X]}f(X,Z), && S\in\fv,\\
 Tf(X,Z) &= \partial_Tf(X,Z), && T\in\fz.
\end{align*}

Let $(S_j)_{j=1,\ldots,p_1}\subseteq\fv_1$ be an orthonormal basis of $\fv_1$ and $(S_j)_{j=p_1+1,\ldots,p}\subseteq\fv_2$ an orthonormal basis of $\fv_2$. The Kohn sub-Laplacians for $\fn_1=\fv_1+\fz$ and $\fn_2=\fv_2+\fz$ are given by
$$ \Delta_1=S_1^2+\cdots+S_{p_1}^2, \qquad \Delta_2=S_{p_1+1}^2+\cdots+S_p^2. $$
The Kohn sub-Laplacian for $\fn$ is the sum
$$ \Delta=\Delta_1 +\Delta_2. $$
We also need the central Laplacian which is given by
$$ \Box=T_1^2 +\cdots +T_q^2, $$
where $(T_j)_j\subseteq\fz$ is an orthonormal basis of $\fz$.

\subsection{Plancherel formula}\label{sec:PlancherelFormula}

We recall very briefly the Plancherel formula on $N$. For $\fz=\{0\}$ the group $N$ is abelian and the Plancherel formula is the classical Plancherel formula for the Euclidean Fourier transform on $\fn=\fv$. Therefore we assume $\fz\neq\{0\}$.

For each $\mu\in\fz\setminus\{0\}$ the map $\frac1{4|\mu|} J_\mu$ is a complex structure on $\fv$. Let $\fv_\mu$ denote the space $\fv$ endowed with this complex structure and $\mathcal{O}(\fv_\mu)$ the space of holomorphic functions on $\fv_\mu$. Further, let $\{\blank,\blank\}_\mu$ denote the corresponding Hermitian form
$$ \{X.Y\}_\mu := \langle X,Y\rangle-i\langle\tfrac{1}{4|\mu|}J_\mu X,Y\rangle. $$
The Fock space
$$ \mathcal F_\mu(\fv) = \left\{\xi\in\mathcal{O}(\fv_\mu):\|\xi\|_\mu^2:=\int_{\fv}|\xi(X)|^2e^{-2|\mu|\cdot|X|^2}dX<\infty\right\} $$
carries an irreducible unitary representation $\sigma_\mu$ of $N$ and $\fn$ given by
$$ (\sigma_\mu(X,Z)\xi)(Y) = \exp(-|\mu|[|X|^2+2\{Y,X\}_\mu]))\exp(i\langle\mu,Z\rangle)\xi(Y+X). $$
Identifying $\fz^\ast$ with $\fz$ via the inner product the representation $\sigma_\mu$ has infinitesimal character $i\mu$. We remark that the space $\mathcal{P}(\fv_\mu)$ of holomorphic polynomials on $\fv_\mu$ is dense in $\mathcal{F}_\mu(\fv)$ and the subspaces $\mathcal{P}_m(\fv_\mu)$, $m\geq0$, of homogeneous polynomials of degree $m$ are mutually orthogonal.

The representation $\sigma_\mu$ of $N$ induces a map,
the Weyl transform, $\sigma_\mu:L^1(N)\to\End(\mathcal{F}_\mu(\fv))$ by
$$ \sigma_\mu(f) = \int_N f(n) \sigma_\mu(n^{-1}) dn. $$
This definition is different from the usual convention; the integrand is $\sigma_\mu(n^{-1})$ instead of $\sigma_\mu(n)$. Our definition has the advantage that for left-invariant vector fields $X\in\fn$ we have the following formula preserving the order:
$$ \sigma_\mu(Xf)=\sigma_\mu(X)\sigma_\mu(f). $$
If we define the convolution of $f, g\in L^1(N)$ by
$$ (f\ast g)(n') = \int_N f(n^{-1}n') g(n) dn = \int_N f(n)g(n'n^{-1}) dn $$
then we further have
$$ \sigma_\mu(f\ast g) = \sigma_\mu(f)\circ\sigma_\mu(g). $$

The following inversion and Plancherel formula for the Weyl transform hold, at least for $f\in C_c^\infty(N)$:
$$ f(n) = c(p,q)\int_\fz \tr(\sigma_\mu(f)\sigma_\mu(n)) |\mu|^{\frac{p}{2}} d\mu =  c(p,q)\int_\fz \tr(\sigma_\mu(n)\sigma_\mu(f)) |\mu|^{\frac{p}{2}} d\mu $$
and
\begin{equation}
 \int_{N}|f(n)|^2dn = c(p,q)\int_{\fz}\Vert\sigma_\mu(f)\Vert_{HS}^2|\mu|^{\frac{p}{2}} d\mu,\label{eq:PlancherelFormula}
\end{equation}
where
$$ \norm{T}_{HS}^2 = \langle T,T\rangle_{HS} = \tr(TT^*), $$
the Hilbert-Schmidt norm of an operator $T$ on $\mathcal F_{\mu}(\fv)$, and
$$ c(p,q) = \frac{2^{1-\frac{p}{2}-2q}\pi^{\frac{1-q}{2}}}{\Gamma(\frac{p+q+1}{2})}. $$

It is easy to see that
\begin{equation}
 \sigma_\mu(\Delta)|_{\mathcal{P}_m(\fv_\mu)} = -2|\mu|(4m+p)\label{eq:WeylTrafoDelta}
\end{equation}
and
\begin{equation}
 \sigma_\mu(\Box) = -|\mu|^2.\label{eq:WeylTrafoBox}
\end{equation}

Note that the space $\mathcal{P}(\fv_\mu)$ is the tensor product
$\mathcal{P}(\fv_{1,\mu})\otimes\mathcal{P}(\fv_{2,\mu})$ where
$\fv_{i,\mu}$ is the space $\fv_i$ endowed with the complex structure
$\frac{1}{4|\mu|}J_\mu|_{\fv_i}$, $i=1,2$. We denote by $\mathcal
F_\mu(\fv_1)$ the corresponding Fock space on $\fv_1$ and by
$\sigma_\mu^{(1)}$ 
the Weyl transform for the subgroup $N_1$.
From \eqref{eq:WeylTrafoDelta} it also follows that
\begin{align}
\begin{split}
 \sigma_\mu(\Delta_1)|_{\mathcal{P}_m(\fv_{1,\mu})\otimes\mathcal{P}(\fv_{2,\mu})} &= -2|\mu|(4m+p_1),\\
 \sigma_\mu(\Delta_2)|_{\mathcal{P}(\fv_{1,\mu})\otimes\mathcal{P}_m(\fv_{2,\mu})} &= -2|\mu|(4m+p_2),
\end{split}\label{eq:WeylTrafoDelta12}
\end{align}
where $\mathcal{P}_m(\fv_{i,\mu})$, $i=1,2$, denotes the subspace of $\mathcal{P}(\fv_{i,\mu})$ of homogeneous polynomials of degree $m$.

\subsection{Weyl transform of powers of the norm function}

Let
$$ K(X,Z)=|X|^4+ |Z|^2, \qquad  X\in\fv,Z\in\fz,$$
denote the norm function on $\fn$. For $s\in\bc$ we define
$$ K_s(n) := K(n)^s, \qquad n\in N. $$
The Weyl transform $\sigma_\mu(K_s)$ of the function $K_s$ acts as a
scalar on the space of homogeneous polynomials of a fixed degree
$m$. More precisely we have (see \cite{Cow82})
\begin{equation}
 \sigma_\mu(K_s)|_{\mathcal{P}_m(\fv_\mu)} = \frac{2^\rho\Gamma(\frac{p+q+1}{2})\Gamma(\frac{2m-2s-\rho+\frac{p}{2}+1}{2})\Gamma(2s+\rho)}{\Gamma(-s)\Gamma(\frac{-2s-\rho+\frac{p}{2}+1}{2})\Gamma(\frac{2m+2s+\rho+\frac{p}{2}+1}{2})}|\mu|^{-2s-\rho}.\label{eq:WeylTrafoNormFct}
\end{equation}

\subsection{Unitary representations of the group $S=NA$}\label{sec:ComplSerOfS}

For $\nu\in\mathbb C$ denote by $\chi_\nu$ the character of $A=\mathbb{R}_+$ given by $\chi_\nu(t)=t^{-\nu}$. Consider the induced representation $Ind_A^{NA}(\chi_\nu)$ realized by the left-regular action on the space
$$ \{f\in C^\infty(NA):f(ga)=\chi_\nu(a)^{-1} f(g)\,\forall\,g\in NA,a\in A\}. $$
Restricting functions to $N$ gives an isomorphic realization $\pi_\nu$ on the space $C^\infty(N)$ which can be described by
$$ \pi_\nu(n)f(n')=f(n^{-1}n'), \quad \pi_\nu(t)f(X,Z)=t^{-\nu} f(t^{-1}X,t^{-2}Z). $$
It restricts to a smooth representation $(X_\nu^\infty,\pi_\nu)$ of $NA$ on $X_\nu^\infty=C_c^\infty(N)$ by the same formulas. For $\nu\in\rho +i\mathbb R$ the representation can be extended to $X_\nu=L^2(N)$ and is unitary on this space.

In \cite{DZ99} Dooley and Zhang determine all real values $\nu\in\mathbb R$ for which the representation $\pi_\nu$ is unitarizable. For this let
$$ \Lambda_{\nu}f := K_{\frac{\nu}{2}-\rho}\ast f, $$
as meromorphic family of operators on $C_c^\infty(N)$. Denote by $\langle\cdot,\cdot\rangle$ the standard inner product on $L^2(N)$.

\begin{theo+}
[{\cite{Cow82}}, {\cite[Theorem 1.1]{DZ99}}]
\label{thm:ComplSerForHType}
The form on $C_c^\infty(N)$ given by
\begin{equation}
 (f, f)_{\nu}=\langle \Lambda_{\nu} f, f \rangle\label{eq:InvForm}
\end{equation}
is $S$-invariant under $\pi_\nu$, i.e.
$$ (\pi_\nu(g)f,\pi_\nu(g)f)_\nu = (f,f)_\nu \qquad \forall\,g\in S, f\in C_c^\infty(N). $$
It is positive definite if and only if
$$ \nu\in \begin{cases}(0,2\rho) & \mbox{for $q=0$,}\\(\rho-\tfrac{p}{2}-1,\rho+\tfrac{p}{2}+1) & \mbox{for $q>0$.}\end{cases} $$
In this case the completion $X_\nu$ of $C_c^\infty(N)$ with 
respect to the norm $\Vert\!\cdot\!\Vert_\nu^2=(\cdot, \cdot)_\nu$ forms a unitary representation of $S=NA$ called complementary series.
\end{theo+}

\begin{remark}\label{rem:CSNormOnFTSide}
Using the Plancherel formula~\eqref{eq:PlancherelFormula} together with \eqref{eq:WeylTrafoNormFct} we find
\begin{align*}
 \|f\|_\nu^2 &= \int_N (K_{\frac{\nu}{2}-\rho}*f)(n)\overline{f(n)} dn\\
 &= c(p,q) \int_\fz \langle\sigma_\mu(K_{\frac{\nu}{2}-\rho}*f),\sigma_\mu(f)\rangle_{HS}|\mu|^{\frac{p}{2}} d\mu\\
 &= c(p,q) \int_\fz \langle\sigma_\mu(K_{\frac{\nu}{2}-\rho})\circ\sigma_\mu(f),\sigma_\mu(f)\rangle_{HS}|\mu|^{\frac{p}{2}} d\mu\\
 &= c(p,q) \sum_m \lambda_m(\nu) \int_\fz \|P_m\circ\sigma_\mu(f)\|_{HS}^2 |\mu|^{p+q-\nu} d\mu,
\end{align*}
where
$$ \lambda_m(\nu) = \frac{2^r\Gamma(\frac{p+q+1}{2})\Gamma(\frac{2m-\nu+\rho+\frac{p}{2}+1}{2})\Gamma(\nu)}{\Gamma(\rho-\frac{\nu}{2})\Gamma(\frac{-\nu+\rho+\frac{p}{2}+1}{2})\Gamma(\frac{2m+\nu-\rho+\frac{p}{2}+1}{2})} \sim (1+m)^{\rho-\nu}. $$
Observe that we may generally define certain Sobolev type norms on the nilpotent group $N$ by replacing $\lambda_m(\nu)$ by the standard weight sequence $(1+m)^s$ and study corresponding analytic problems such as boundedness of restriction operators and Sobolev type embedding theorems. Note that by \eqref{eq:WeylTrafoDelta} the norm $u\mapsto\|u\|_\nu$ is equivalent to the norm $u\mapsto\|\Delta^{\frac{\rho-\nu}{2}}u\|_{L^2(N)}$.
\end{remark}

Similarly we denote by $X_\nu^{\flat,\infty}$ the corresponding family of smooth representations of the group $S_1=N_1A$ on $C_c^\infty(N_1)$ and by $X_\nu^\flat$ their unitary completions for $\nu$ in the interval of the complementary series.


\section{Invariant differential operators}

We construct a sequence of left-invariant differential operators $D_{s,k}$, $k\geq0$, on $N$ which induce $S_1$-intertwining operators $\mathcal{D}_{\nu,k}:X_\nu^\infty\to X_{\nu+2k}^{\flat,\infty}$.

\subsection{Identities for the norm function}

First we apply various operators to the functions $K_s$. Some of these
computations are already done in \cite[Theorem 2.8]{CK84}
and we present here the full details. Let $\fn_1$
be as in Section~\ref{sec:HtypeSubgroups} an H-type subalgebra.

For $X\in\fv$ we use the notation $X=X_1+X_2$ with $X_1\in\fv_1$ and $X_2\in\fv_2$.

\begin{lemm+}\label{lem:OperatorsOnKsigma}
\begin{enumerate}
\item $\Delta K_s=B(s)|X|^2K_{s-1}$ with $B(s)=4s(4s+2\rho-2)$,
\item $\Delta^2K_s=B(s)(2pK_{s-1}+(B(s-1)+16(s-1))|X|^4K_{s-2})$,
\item $\Box K_s=2s qK_{s-1}+4s(s-1)|Z|^2K_{s-2}$
\item $\Delta_2 K_s=(4s p_2|X|^2+8s(2s+q-1)|X_2|^2)K_{s-1}$.
\end{enumerate}
\end{lemm+}

\begin{proof}
We have
\begin{align*}
 S_jK_s ={}& s(4\langle X,S_j\rangle|X|^2-\langle J_ZS_j,X\rangle)K_{s-1},\\
 S_j^2K_s ={}& s(s-1)(4\langle X,S_j\rangle|X|^2-\langle J_ZS_j,X\rangle)^2K_{s-2}\\
 & + s(4|X|^2+8\langle X,S_j\rangle^2+\tfrac{1}{2}\langle J_{[X,S_j]}X,S_j\rangle)K_{s-1}.
\end{align*}
Summing over $1\leq j\leq p$ we obtain
\begin{align*}
 \Delta K_s ={}& s(s-1)(16|X|^6-8|X|^2\langle J_ZX,X\rangle+|J_ZX|^2)K_{s-2}\\
 & +s\left(4p|X|^2+8|X|^2+\frac{1}{2}\sum_{j=1}^p\langle J_{[X,S_j]}X,S_j\rangle\right)K_{s-1}
\end{align*}
Since $\langle J_ZX,X\rangle=0$, $|J_ZX|^2=16|Z|^2|X|^2$ and
$$ \sum_{j=1}^p\langle J_{[X,S_j]}X,S_j\rangle=16|X|^2\sum_{j=1}^p\langle P_{J_\fz X}S_j,S_j\rangle=16|X|^2\tr(P_{J_\fz X})=16q|X|^2 $$
this gives
\begin{align*}
 \Delta K_s ={}& 16s(s-1)|X|^2K_{s-1}+4s\left(2\rho+2\right)|X|^2K_{s-1}\\
 ={}& B(s)|X|^2K_{s-1}.
\end{align*}
Summing over $p_1+1\leq j\leq p$ instead gives
\begin{align*}
 \Delta_2 K_s ={}& s(s-1)\left(16|X_2|^2|X|^4-8|X|^2\langle J_ZX_2,X\rangle+\sum_{j=p_1+1}^p\langle J_ZX_2,S_j\rangle^2\right)K_{s-2}\\
 & +s\left(4p_2|X|^2+8|X_2|^2+\frac{1}{2}\sum_{j=p_1+1}^p\langle J_{[X,S_j]}X,S_j\rangle\right)K_{s-1}
\intertext{Note that $J_ZX_2\in\fv_2$ and hence $\langle J_ZX_2,X\rangle=\langle J_ZX_2,X_2\rangle=0$. Further, $\langle J_ZX_2,S_j\rangle=0$ for $j=1,\ldots,p_1$ and therefore $\sum_{j=p_1+1}^p\langle J_ZX_2,S_j\rangle^2=|J_ZX_2|^2=16|Z|^2|X_2|^2$. Finally, $\sum_{j=p_1+1}^p\langle J_{[X,S_j]}X,S_j\rangle=\sum_{j=p_1+1}^p\langle J_{[X_2,S_j]}X_2,S_j\rangle=16q|X_2|^2$ as above. Together this gives}
 ={}& 16s(s-1)|X_2|^2K_{s-1}+4s p_2|X|^2K_{s-1}+8s(q+1)|X_2|^2K_{s-1}\\
 ={}& (4s p_2|X|^2+8s(2s+q-1)|X_2|^2)K_{s-1}.
\end{align*}
To calculate $\Delta^2K_s$ we first note that $\Delta|X|^2=2p$. Applying $\Delta$ to $|X|^2K_{s-1}$ gives
\begin{align*}
 \Delta^2K_s ={}& B(s)\Bigg(B(s-1)|X|^4K_{s-2}+2pK_{s-1}\\
 & \hspace{1.5cm}+2\sum_{j=1}^p2\langle X,S_j\rangle(s-1)(4\langle X,S_j\rangle|X|^2-\langle J_ZS_j,X\rangle)K_{s-2}\Bigg)\\
 ={}& B(s)(2pK_{s-1}+(B(s-1)+16(s-1))K_{s-2}).
\end{align*}
Finally
\begin{align*}
 T_jK_s &= 2s\langle Z,T_j\rangle K_{s-1},\\
 T_j^2K_s &= 2s K_{s-1}+4s(s-1)\langle Z,T_j\rangle^2K_{s-2}
\intertext{and hence}
 \Box K_s &= 2qs K_{s-1}+4s(s-1)|Z|^2K_{s-2}.\qedhere
\end{align*}
\end{proof}

\begin{coro+}\label{cor:OpIdentitiesForKsigma}
\begin{enumerate}
\item $(\Delta^2 +4(4s +2\rho -2)^2\Box)K_s=2B(s)(2s+q-1)(4s+2\rho-4)K_{s-1}$,
\item $(\Delta_2 -\frac{p_2}{4s +2\rho-2}\Delta)K_s=8s(2s+q-1)|X_2|^2K_{s-1}$
\end{enumerate}
\end{coro+}

Next we study the action of the Kohn sub-Laplacians on functions of the form $|X_2|^{2m}K_{s-m}$.

\begin{lemm+}
\begin{enumerate}
\item $\Delta(|X_2|^{2m}K_{s-m})=4(s-m)(4s+2\rho-2)|X|^2|X_2|^{2m}K_{s-m-1}+2m(2m+p_2-2)|X_2|^{2m-2}K_{s-m}$,
\item $\Delta_2(|X_2|^{2m}K_{s-m})=4(s-m)(4m+p_2)|X|^2|X_2|^{2m}K_{s-m-1}+8(s-m)(2s+q-2m-1)|X_2|^{2m+2}K_{s-m-1}+2m(2m+p_2-2)|X_2|^{2m-2}K_{s-m}$.
\end{enumerate}
\end{lemm+}

\begin{proof}
To calculate $\Delta(|X_2|^{2m}K_{s-m})$ we first note that $\Delta|X_2|^{2m}=\Delta_2|X_2|^{2m}=2m(2m+p_2-2)|X_2|^{2m-2}$. Hence we obtain with Lemma~\ref{lem:OperatorsOnKsigma}~(1)
\begin{align*}
 \Delta(|X_2|^{2m}K_{s-m}) ={}& B(s-m)|X|^2|X_2|^{2m}K_{s-m-1}+2m(2m+p_2-2)|X_2|^{2m-2}K_{s-m}\\
 & +2\sum_{j=p_1+1}^p 2m\langle X,S_j\rangle|X_2|^{2m-2}\cdot(s-m)(4\langle X,S_j\rangle|X|^2-\langle J_ZS_j,X\rangle)K_{s-m-1}\\
 ={}& B(s-m)|X|^2|X_2|^{2m}K_{s-m-1}+2m(2m+p_2-2)|X_2|^{2m-2}K_{s-m}\\
 & +4m(s-m)(4|X_2|^2|X|^2-\langle J_ZX_2,X\rangle)|X_2|^{2m-2}K_{s-m-1}.\\
\end{align*}
Since $\langle J_ZX_2,X\rangle=\langle J_ZX_2,X_2\rangle=0$ this shows (1). For (2) we calculate with Lemma~\ref{lem:OperatorsOnKsigma}~(4) and the previous calculation:
\begin{align*}
 \Delta_2(|X_2|^{2m}K_{s-m}) ={}& (4(s-m)p_2|X|^2+8(s-m)(2s+q-2m-1)|X_2|^2)|X_2|^{2m}K_{s-m-1}\\
 & +2m(2m+p_2-2)|X_2|^{2m-2}K_{s-m}+16m(s-m)|X|^2|X_2|^{2m}K_{s-m-1}
\end{align*}
which shows (2).
\end{proof}

\begin{coro+}\label{cor:OpIdentitiesforKsigmaX2m}
\begin{multline*}
 ((4s+2\rho-2)\Delta_2-(4m+p_2)\Delta)(|X_2|^{2m}K_{s-m})\\
 = 8(s-m)(4s+2\rho-2)(2s+q-2m-1)|X_2|^{2m+2}K_{s-m-1}\\
 +2m(2m+p_2-2)(4s+2\rho_1-4m-2)|X_2|^{2m-2}K_{s-m}.
 \end{multline*}
\end{coro+}

\subsection{Construction of the differential operators}

We now construct constant coefficient differential operators $D_{s,k}$ inductively by putting
\begin{align*}
 D_{s,0} &:= 1,\\
 D_{s,1} &:= \frac{1}{8s(4s+2\rho-2)(2s+q-1)} \Big((4s+2\rho-2)\Delta_2-p_2\Delta\Big),
\end{align*}
and
\begin{multline}
 D_{s,k+1} := \frac{1}{8(s-k)(4s+2\rho-2)(2s+q-2k-1)} \Bigg[ \Big((4s+2\rho-2)\Delta_2-(4k+p_2)\Delta\Big)D_{s,k}\\
 -\frac{2k(2k+p_2-2)(4s+2\rho_1-4k-2)}{2B(s)(2s+q-1)(4s+2\rho-4)}\Big(\Delta^2+4(4s+2\rho-2)^2\Box\Big)D_{s-1,k-1} \Bigg].\label{eq:DefinitionInvariantDiffOps}
\end{multline}
We note that the operator $D_{s,k}$ is defined in particular if $-2s\in(\rho-\frac{p}{2}-1,\rho)$ and $-2s+2k\in(\rho_1-\frac{p_1}{2}-1,\rho_1)$.

\begin{prop+}\label{prop:DiffOpOnKsigma}
The constant coefficient differential operator $D_{s,k}$ is of the form
$$ D_{s,k} = \sum_{i+j+2\ell=k}C_{s,k}(i,j,\ell)\Delta_1^i\Delta_2^j\Box^\ell. $$
Further, $D_{s,k}$ satisfies
\begin{equation*}
 D_{s,k}K_s = |X_2|^{2k}K_{s-k}.
\end{equation*}
\end{prop+}

\begin{proof}
Only the last statement is non-trivial. We prove it by induction on $k$. For $k=0$ this is trivial and for $k=1$ this is Corollary~\ref{cor:OpIdentitiesForKsigma}~(2). Now suppose we have $D_{t,\ell}K_t=|X_2|^{2\ell}K_{t-\ell}$ for every $t$ and $0\leq \ell\leq k$. We apply this identity for $(t,\ell)=(s,k)$ and $(t,\ell)=(s-1,k-1)$ to the identity in Corollary~\ref{cor:OpIdentitiesforKsigmaX2m} to obtain
\begin{multline*}
 ((4s+2\rho-2)\Delta_2-(4k+p_2)\Delta)D_{s,k}K_s\\
 = 8(s-k)(4s+2\rho-2)(2s+q-2k-1)|X_2|^{2k+2}K_{s-k-1}\\
 +2k(2k+p_2-2)(4s+2\rho_1-4k-2)D_{s-1,k-1}K_{s-1}.
\end{multline*}
By Corollary~\ref{cor:OpIdentitiesForKsigma}~(1) we can write $K_{s-1}$ as a differential operator consisting of $\Delta$ and $\Box$ applied to $K_s$. Inserting this into the above identity and isolating $|X_2|^{2k+2}K_{s-k-1}$ gives the claim.
\end{proof}

\subsection{Intertwining property}

In this section we show that Proposition~\ref{prop:DiffOpOnKsigma} implies an intertwining property for the operators $D_{s,k}$. For this let $R:C_c^\infty(\fn)\to C_c^\infty(\fn_1)$ denote the restriction operator.

\begin{theo+}\label{thm:IntertwiningDiffOps}
For each $k\geq0$ the operator $\mathcal{D}_{\nu,k}:=R\circ D_{-\frac{\nu}{2},k}$ is $S_1$-intertwining between $(X_{\nu}^{\infty}, \pi_\nu)|_{S_1}\to (X_{\nu+2k}^{\flat,\infty}, \pi_{\nu+2k}^\flat)$.
\end{theo+}

For this consider the restriction operator $R$ and the multiplication operator
$$ M_k:C_c^\infty(\fn)\to C_c^\infty(\fn), \quad M_kf(X,Z) = |X_2|^{2k}f(X,Z). $$

\begin{lemm+}
$R$ is an $S_1$-intertwining operator $X_{\beta}^\infty\to X_{\beta}^{\flat,\infty}$ and $M_k$ an $S_1$-intertwining operator $X_{\beta}^\infty\to X_{\beta-2k}^{\flat,\infty}$.
\end{lemm+}

Recall the meromorphic family of Knapp--Stein intertwining operators $\Lambda_\nu:X_{\nu}^\infty\to X_{2\rho-\nu}^\infty$ which are $S$-equivariant and are isomorphisms for $\nu$ in the interval of the complementary series (see Theorem~\ref{thm:ComplSerForHType}). The composition $\Lambda_{\nu}\circ\Lambda_{2\rho-\nu}:X_{2\rho-\nu}^\infty\to X_{2\rho-\nu}^\infty$ is a scalar multiple of the identity:
$$ \Lambda_{\nu}\circ\Lambda_{2\rho-\nu} = a(\nu)\cdot\id_{X_{2\rho-\nu}} $$
with some meromorphic function $a(\nu)$ which is non-zero for $\nu$ in the interval of the complementary series.

The following proposition together with the previous lemma implies Theorem~\ref{thm:IntertwiningDiffOps}:

\begin{prop+}\label{prop:DiffOpIntertwining}
We have, as meromorphic identity in $\nu$:
\begin{equation}
 \mathcal{D}_{\nu,k}= a(\nu)^{-1}\cdot R\circ\Lambda_{2\rho-\nu-2k}\circ M_k\circ\Lambda_\nu.\label{eq:DiffOpAsConjugationOfMult}
\end{equation}
\end{prop+}

\begin{rema+}
The idea of obtaining an operator which is intertwining for a subgroup $S_1$ by ``conjugating'' an intertwining multiplication operator $M_k$ by classical Knapp--Stein operators $\Lambda_\nu$ and $\Lambda_{2\rho-\nu-2k}$ was used before by Beckmann--Clerc~\cite{BC12}. In their case the underlying nilpotent group $N$ was abelian and they could use the Euclidean Fourier transform to turn the convolution operators $\Lambda_\nu$ and $\Lambda_{2\rho-\nu-2k}$ into multiplication operators, simplifying the calculations. This technique does not work in our case.
\end{rema+}

\begin{proof}[{Proof of Proposition~\ref{prop:DiffOpIntertwining}}]
Since both sides of the identity \eqref{eq:DiffOpAsConjugationOfMult} are meromorphic in $\nu$ it suffices to prove it for $\nu$ in the interval of the complementary series (see Theorem~\ref{thm:ComplSerForHType}). Here $\Lambda_{2\rho-\nu}$ is an isomorphism and we may multiply \eqref{eq:DiffOpAsConjugationOfMult} from the right with it and equivalently show
\begin{equation}
 \mathcal{D}_{\nu,k}\circ\Lambda_{2\rho-\nu} = R\circ\Lambda_{2\rho-\nu-2k}\circ M_k.\label{eq:RewrittenDiffOpAsConjugationOfMult}
\end{equation}
We first calculate the right hand side of \eqref{eq:RewrittenDiffOpAsConjugationOfMult}:
\begin{align*}
 (R\circ\Lambda_{2\rho-\nu-2k}\circ M_k)F(n_1) &= \Lambda_{2\rho-\nu-2k}M_kF(n_1)\\
 &= \int_N K_{-\frac{\nu}{2}-k}(n^{-1}n_1)|X_2|^{2k}F(n) dn,
\end{align*}
where $n=(X,Z)$. To calculate the left hand side we apply $D_{-\frac{\nu}{2},k}\circ\Lambda_{2\rho-\nu}$ to $F$ and find
\begin{align*}
 (D_{-\frac{\nu}{2},k}\circ\Lambda_{2\rho-\nu})F(n) &= \int_N (D_{-\frac{\nu}{2},k})_nK_{-\frac{1}{2}\nu}(n'^{-1}n)F(n') dn'.
\end{align*}
Since $D_{-\frac{\nu}{2},k}$ is left-invariant we find by Proposition~\ref{prop:DiffOpOnKsigma}
\begin{align*}
 (D_{-\frac{\nu}{2},k})_nK_{-\frac{1}{2}\nu}(n'^{-1}n) &= (D_{-\frac{\nu}{2},k}K_{-\frac{1}{2}\nu})(n'^{-1}n)\\
 &= |(n'^{-1}n)_2|^{2k}K_{-\frac{1}{2}\nu-k}(n'^{-1}n),
\end{align*}
where $(n'^{-1}n)_2$ is the $\fv_2$-component of $n'^{-1}n$. Restricting to $n=n_1\in N_1$ gives $|(n'^{-1}n)_2|=|X_2'|$ with $n'=(X',Z')$ and $X'=X_1'+X_2'$, $X_j'\in\fv_j$. Hence we obtain
\begin{equation}
 (R\circ D_{-\frac{\nu}{2},k}\circ\Lambda_{2\rho-\nu})F(n_1) = \int_N K_{-\frac{1}{2}\nu-k}(n'^{-1}n_1)|X_2'|^{2k}F(n') dn'.\label{eq:DiffOpsAndGenKernels}
\end{equation}
Hence the claimed identity follows.
\end{proof}

\begin{rema+}
Equation~\eqref{eq:DiffOpsAndGenKernels} shows that the operator $\mathcal{D}_{\nu,k}\circ\Lambda_{2\rho-\nu}$ is the integral operator $C_c^\infty(N)\to C^\infty(N_1)$ given by the kernel $K_{-\frac{1}{2}\nu-k}(n^{-1}n_1)|X_2|^{2k}$ with $n=(X_1+X_2,Z)\in N$, $n_1\in N_1$. These kernels were for $\fn=\br^n$ first constructed by Kobayashi--Speh~\cite{KS} and then generalized by M\"{o}llers--{\O}rsted--Oshima~\cite{MOO} to more general situations including the cases $\fn=\mathbb{F}^n\oplus\im\mathbb{F}$ with $\mathbb{F}=\br,\bc,\bh$, $n\geq1$ and $\mathbb{F}=\bo$, $n=1$.
\end{rema+}

\begin{rema+}\label{rem:JuhlOps}
In the degenerate case $\fn=\fv=\mathbb R^p$ and $\fn_1=\fv_1=\mathbb{R}^{p_1}$ we have $\Delta_1=\frac{\partial^2}{\partial x_1^2}+\cdots+\frac{\partial^2}{\partial x_{p_1}^2}$ and $\Delta_2=\frac{\partial^2}{\partial x_{p_1+1}^2}+\cdots+\frac{\partial^2}{\partial x_p^2}$. Further $\Box=0$ and hence the operators $D_{s,k}$ are basically one-variable polynomials. They can be described using the classical Jacobi polynomials $P_n^{(\alpha,\beta)}(z)$, see Appendix~\ref{app:JacobiPolys} for their definition and properties. More precisely, define two-variable polynomials $P_n^{(\alpha,\beta)}(z,w)$ by
$$ P_n^{(\alpha,\beta)}(z,w) := w^n P_n^{(\alpha,\beta)}\left(2\frac{z}{w}+1\right), $$
then
$$ D_{s,k} = \frac{k!}{2^{2k}(-2s)_{2k}(-2s-\frac{p}{2}+1)_k} P_k^{(\frac{p_2}{2}-1,-2s-\frac{p}{2})}(\Delta_2,\Delta_1). $$
This can be seen using the recurrence relation \eqref{eq:DefinitionInvariantDiffOps} and the following identity for the polynomials $P_n^{(\alpha,\beta)}(z,w)$ which follows from \eqref{eq:NewIdentityJacobiPolynomials}:
\begin{multline*}
 (n+\beta+1)\left[(2n+\alpha+\beta+2)z+(2n+\alpha+1)w\right]P_n^{(\alpha,\beta)}(z,w)\\
 = (n+\alpha)(2n+\alpha+\beta+2)(z+w)^2P_{n-1}^{(\alpha,\beta+2)}(z,w) + (n+1)(\beta+1)P_{n+1}^{(\alpha,\beta)}(z,w).
\end{multline*}
In the special case $p_1=p-1$ the Jacobi polynomials reduce to Gegenbauer polynomials (see \eqref{eq:JacobiSpecializationGegenbauer}) and the operators $D_{s,k}$ can be written as
$$ D_{s,k} = \frac{(2k)!}{2^{2k}(-2s)_{2k}(-4s-p+1)_{2k}} C_k^{-2s-\frac{p-1}{2}}\left(\Delta_2,\Delta_1\right), $$
where $C_n^\lambda(z,w)$ is the homogeneous polynomial of order $n$ which is defined using the classical Gegenbauer polynomials $C_n^\lambda(z)$ as follows:
$$ C_n^\lambda(z,w) := w^nC_{2n}^\lambda\left(i\sqrt{\frac{z}{w}}\right). $$
The operators $C_n^\lambda(\Delta_2,\Delta_1)$ were first constructed by Juhl~\cite{Juh09} (see also \cite{KOSS,KS}). We further remark that the polynomials $P_n^{(\alpha,\beta)}(z,w)$ and $C_n^\lambda(z,w)$ also appear in the description of intertwining differential operators for holomorphic symmetric pairs by Kobayashi--Pevzner~\cite{KP} (note the different definition of $C_n^\lambda(z,w)$).
\end{rema+}

\section{Boundedness}

This section is devoted to the proof of our main result:

\begin{theorem}\label{thm:Boundedness}
Let $\nu>0$, $k\in\bn_0$. If $\rho-\frac{p}{2}-1<\nu<\rho$ and $\rho_1-\frac{p_1}{2}-1<\nu+2k<\rho_1$ then the operator $\mathcal{D}_{\nu,k}:X_\nu^\infty\to X_{\nu+2k}^{\flat,\infty}$ is a bounded $S_1$-intertwining operator and hence extends to a bounded intertwining operator $X_\nu\to X_{\nu+2k}^\flat$ between Hilbert spaces.
\end{theorem}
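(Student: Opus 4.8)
The plan is to pass to the Fourier/Weyl transform side and reduce the asserted boundedness to a uniform bound for an explicit scalar multiplier. By Remark~\ref{rem:CSNormOnFTSide} the norm on $X_\nu$ is
$$ \|f\|_\nu^2 = c(p,q)\sum_m \lambda_m(\nu)\int_\fz \|P_m\sigma_\mu(f)\|_{HS}^2\,|\mu|^{p+q-\nu}\,d\mu,\qquad \lambda_m(\nu)\sim(1+m)^{\rho-\nu}, $$
and the norm on $X_{\nu+2k}^\flat$ is given by the same formula with $p,q,\rho,\sigma_\mu,P_m$ replaced by $p_1,q,\rho_1,\sigma_\mu^{(1)},P_{m_1}^{(1)}$ and $\nu$ by $\nu+2k$. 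Refining the grading of $\mathcal{F}_\mu(\fv)=\mathcal{F}_\mu(\fv_1)\otimes\mathcal{F}_\mu(\fv_2)$ into bidegrees $(m_1,m_2)$, so that $\lambda_m(\nu)$ depends only on $m=m_1+m_2$, I would rewrite $\|f\|_\nu^2$ as a sum over $(m_1,m_2)$ of the Hilbert--Schmidt masses of the bidegree blocks of $\sigma_\mu(f)$. The goal is then the coefficient inequality comparing these two expressions.

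The key step is to identify the Weyl transform of the restriction operator $R$. Since $\fv_1,\fv_2$ are orthogonal and invariant under the complex structure $\tfrac{1}{4|\mu|}J_\mu$, the representation operator factors as $\sigma_\mu(X_1,0,Z)=\sigma_\mu^{(1)}(X_1,Z)\otimes\id_{\mathcal{F}_\mu(\fv_2)}$. Inserting this into the Weyl inversion formula on $N$, specialising to $n=(X_1,0,Z)\in N_1$, and comparing with the inversion formula on $N_1$, the defining property of the partial trace $\operatorname{Tr}_2$ over $\mathcal{F}_\mu(\fv_2)$ together with injectivity of the Weyl transform yields
$$ \sigma_\mu^{(1)}(Rg)=\frac{c(p,q)}{c(p_1,q)}\,|\mu|^{\frac{p_2}{2}}\,\operatorname{Tr}_2\sigma_\mu(g),\qquad g\in C_c^\infty(N). $$
Combined with $\sigma_\mu(D_{-\frac{\nu}{2},k}g)=\sigma_\mu(D_{-\frac{\nu}{2},k})\sigma_\mu(g)$ and the fact that, by \eqref{eq:WeylTrafoDelta12} and \eqref{eq:WeylTrafoBox}, the operator $\sigma_\mu(D_{-\frac{\nu}{2},k})$ acts on the block $\mathcal{P}_{m_1}(\fv_{1,\mu})\otimes\mathcal{P}_{m_2}(\fv_{2,\mu})$ as a scalar $d_{m_1,m_2}(\mu)$, this gives a completely explicit description of $\sigma_\mu^{(1)}(\mathcal{D}_{\nu,k}f)$ as a weighted partial trace of $\sigma_\mu(f)$.

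A bookkeeping of homogeneities then shows that the two $\fz$-integrals match and the variable $\mu$ drops out. Indeed, by Proposition~\ref{prop:DiffOpOnKsigma} the operator $D_{-\frac{\nu}{2},k}$ is a sum of monomials $\Delta_1^i\Delta_2^j\Box^\ell$ with $i+j+2\ell=k$, so its symbol factors as $d_{m_1,m_2}(\mu)=|\mu|^k\,\tilde d_{m_1,m_2}$ with $\tilde d_{m_1,m_2}$ independent of $\mu$ and polynomial of degree $\le k$ in $(m_1,m_2)$; together with the factor $|\mu|^{p_2/2}$ from $R$ this exactly compensates the discrepancy between the output weight $|\mu|^{p_1+q-\nu-2k}$ and the input weight $|\mu|^{p+q-\nu}$. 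A Schur-type estimate, obtained by bounding the partial trace via $\|\operatorname{Tr}_2 B\|_{HS}\le(\dim\mathcal{P}_{m_2}(\fv_{2,\mu}))^{1/2}\|B\|_{HS}$ and applying Cauchy--Schwarz in the summation over $m_2$ produced by $\operatorname{Tr}_2$, then reduces the theorem to the uniform bound $\sup_{m_1}M(m_1)<\infty$ for
$$ M(m_1)=\lambda^{(1)}_{m_1}(\nu+2k)\sum_{m_2}\frac{|\tilde d_{m_1,m_2}|^2\,\dim\mathcal{P}_{m_2}(\fv_{2,\mu})}{\lambda_{m_1+m_2}(\nu)}. $$

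The main obstacle is the analysis of this multiplier, which is where the hypotheses enter sharply. Using $\lambda^{(1)}_{m_1}(\nu+2k)\sim(1+m_1)^{\rho_1-\nu-2k}$, $\lambda_{m_1+m_2}(\nu)\sim(1+m_1+m_2)^{\rho-\nu}$, $\dim\mathcal{P}_{m_2}(\fv_{2,\mu})\sim(1+m_2)^{\frac{p_2}{2}-1}$ and $|\tilde d_{m_1,m_2}|\lesssim(1+m_1+m_2)^k$, the $m_2$-series converges precisely when $2k+\tfrac{p_2}{2}-1-(\rho-\nu)<-1$, i.e. exactly when $\nu+2k<\rho-\tfrac{p_2}{2}=\rho_1$; this is the decisive (Sobolev trace) condition. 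A more careful summation shows that the resulting power of $(1+m_1)$ cancels the prefactor $\lambda^{(1)}_{m_1}(\nu+2k)$ exactly, so that $M(m_1)=O(1)$ uniformly, while the lower endpoints $\nu>\rho-\tfrac{p}{2}-1$ and $\nu+2k>\rho_1-\tfrac{p_1}{2}-1$ guarantee positivity of all $\lambda_m(\nu)$ and $\lambda^{(1)}_{m_1}(\nu+2k)$, so that both expressions are genuine norms. Because the powers match with no room to spare, the crude Cauchy--Schwarz bounds are delicate to make uniform in $m_1$; I expect the cleanest execution to replace them by an exact computation, since by Remark~\ref{rem:JuhlOps} the symbol $\tilde d_{m_1,m_2}$ is a Jacobi polynomial and $\lambda_m(\nu)$ is an explicit ratio of Gamma functions, so the multiplier should collapse to a ratio of Gamma factors whose boundedness can be read off directly.
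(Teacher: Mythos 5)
Your proposal is correct and follows essentially the same route as the paper's own proof: pass to the Weyl transform side, identify $\sigma_\mu^{(1)}(Rf)$ as $|\mu|^{p_2/2}$ times an operator-valued partial trace, use the scalar action $|\mu|^k\tilde d_{m_1,m_2}$ of $\sigma_\mu(D_{-\frac{\nu}{2},k})$ on bidegree blocks, and reduce via Cauchy--Schwarz to the uniform multiplier bound $\sup_{m_1}M(m_1)<\infty$, whose convergence condition is exactly $\nu+2k<\rho_1$. The only difference is cosmetic: the paper's Lemma~\ref{lem:StandardEstimate} supplies precisely the ``more careful summation'' you were worried about, giving $M(m_1)=O(1)$ uniformly, so the crude polynomial bounds suffice and no exact Jacobi-polynomial computation is needed.
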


For the abelian case $\fn=\fv=\br^n$ and $\fn_1=\fv_1=\br^m$ these results were already obtained by Kobayashi--Speh~\cite{KS} and by M\"{o}llers--Oshima~\cite{MO}. Hence we assume $\fz\neq\{0\}$ for the rest of this section which allows us to use the Plancherel formula for $N$ an $N_1$ as formulated in Section~\ref{sec:PlancherelFormula} as well as Remark~\ref{rem:CSNormOnFTSide}.

\subsection{Operator-valued partial trace} 

We first study the composition of the Weyl transform with the restiction operator $R:X_\nu^\infty\to X_\nu^{\flat,\infty}$. For this we introduce the operator-valued partial trace. For a Hilbert space $\mathcal{H}$ let $\mathcal{S}(\mathcal{H})$ denote the space of operators $T$ on $\mathcal{H}$ such that for some (or equivalently all) orthonormal bases $(e_\alpha)$ of $\mathcal{H}$ the sequence $\langle Te_\alpha,e_\alpha\rangle$ is rapidly decreasing. Further, for two Hilbert space $\mathcal{H}_i$, $i=1,2$, denote by $\mathcal{H}_1\otimes\mathcal{H}_2$ the Hilbert space tensor product.

\begin{lemm+}
The identity
$$ \langle\tr^{(2)}(T), S\rangle = \langle T, S\otimes I\rangle $$
defines a map
$$ \tr^{(2)}: \mathcal S(\mathcal H_1\otimes \mathcal H_2)\to\mathcal S(\mathcal H_1) $$
called the \textit{operator-valued partial trace}. It has the following properties:
\begin{enumerate}
\item $\tr^{(2)}((S\otimes I)\circ T)=S\circ\tr^{(2)}(T)$,
\item $\tr(\tr^{(2)}(T))=\tr(T)$.
\end{enumerate}
\end{lemm+}

We can express the Weyl transform of a restriction in terms of the operator-valued partial trace:

\begin{lemm+}\label{lem:WeylTrafoOfRestriction}
$$ \sigma_\mu^{(1)}(Rf) = \frac{\Gamma(\frac{p_1+q+1}{2})}{2^{\frac{p-p_1}{2}}\Gamma(\frac{p+q+1}{2})}|\mu|^{\frac{p-p_1}{2}}\cdot\tr^{(2)}(\sigma_\mu(f)). $$
\end{lemm+}

\begin{proof}
We use the Fourier inversion formula to find that for $n_1\in N_1$:
\begin{align*}
 Rf(n_1) &= f(n_1) = c(p,q) \int_\fz \tr(\sigma_\mu(n_1)\circ\sigma_\mu(f)) |\mu|^{\frac{p}{2}} d\mu\\
 &= c(p,q) \int_\fz \tr((\sigma_\mu^{(1)}(n_1)\otimes I)\circ\sigma_\mu(f)) |\mu|^{\frac{p}{2}} d\mu\\
 &= c(p,q) \int_\fz \tr(\sigma_\mu^{(1)}(n_1)\circ\tr^{(2)}(\sigma_\mu(f))) |\mu|^{\frac{p}{2}} d\mu.
\end{align*}
On the other hand
$$ Rf(n_1) = c(p_1,q) \int_\fz \tr(\sigma_\mu^{(1)}(n_1)\circ\sigma_\mu^{(1)}(Rf)) |\mu|^{\frac{p_1}{2}} d\mu $$
whence
\begin{equation*}
 \sigma_\mu^{(1)}(Rf) = \frac{c(p,q)}{c(p_1,q)}|\mu|^{\frac{p-p_1}{2}}\tr^{(2)}(\sigma_\mu(f)). \qedhere
\end{equation*}
\end{proof}

\subsection{Proof of boundedness}

Now we show the boundedness of the intertwining operators $\mathcal{D}_{\nu,k}$. We shall need an elementary fact (see also \cite[Lemma 3.5]{Zha11}):

\begin{lemm+}\label{lem:StandardEstimate}
Suppose that $\alpha>-1$, $\beta\geq0$ and $\beta-\alpha>1$. Then there exists a constant $C>0$ such that
$$ \sum_{n=0}^\infty\frac{(n+1)^\alpha}{(n+q+1)^\beta} \leq C\frac{1}{(q+1)^{\beta-\alpha-1}} \qquad \forall\,q\geq0. $$
\end{lemm+}


\begin{theo+}
For $\nu\in(\rho-\frac{p}{2}-1,\rho)$, $\nu+2k\in(\rho_1-\frac{p_1}{2}-1,\rho_1)$ the operator $\mathcal{D}_{\nu,k}:X_\nu^\infty\to X_{\nu+2k}^{\flat,\infty}$ is bounded.
\end{theo+}

\begin{proof}
By Remark~\ref{rem:CSNormOnFTSide} it suffices to show that
$$ \sum_n \lambda^\flat_n(\nu+2k)\|P_n^{(1)}\circ\sigma_\mu^{(1)}(\mathcal{D}_{\nu,k}f)\|_{HS}^2 \leq C|\mu|^{2k+p-p_1} \sum_m \lambda_m(\nu)\|P_m\circ\sigma_\mu(f)\|_{HS}^2. $$
By Lemma~\ref{lem:WeylTrafoOfRestriction} this is equivalent to showing
$$ \sum_n \lambda^\flat_n(\nu+2k)\|P_n^{(1)}\circ\tr^{(2)}(\sigma_\mu(D_{-\frac{\nu}{2},k}f))\|_{HS}^2 \leq C|\mu|^{2k}\sum_m \lambda_m(\nu)\|P_m\circ\sigma_\mu(f)\|_{HS}^2. $$
We calculate
\begin{align*}
 P_n^{(1)}\circ\tr^{(2)}(\sigma_\mu(D_{-\frac{\nu}{2},k}f)) &= \tr^{(2)}((P_n^{(1)}\otimes\1)\sigma_\mu(D_{-\frac{\nu}{2},k}f))\\
 &= \sum_{N\geq n} \tr^{(2)}((P_n^{(1)}\otimes P_{N-n}^{(2)})\sigma_\mu(D_{-\frac{\nu}{2},k})\sigma_\mu(f)).
\end{align*}
Using Proposition~\ref{prop:DiffOpOnKsigma} together with \eqref{eq:WeylTrafoBox} and \eqref{eq:WeylTrafoDelta12} we find that
$$ (P_n^{(1)}\otimes P_{N-n}^{(2)})\sigma_\mu(D_{-\frac{\nu}{2},k}) = A(N,n)|\mu|^k(P_n^{(1)}\otimes P_{N-n}^{(2)}) $$
with some polynomial $A(N,n)$ of order $\leq k$, depending on $\nu$. Hence
$$ P_n^{(1)}\circ\tr^{(2)}(\sigma_\mu(D_{-\frac{\nu}{2},k}f)) = |\mu|^k\sum_{N\geq n} A(N,n)\cdot\tr^{(2)}((P_n^{(1)}\otimes P_{N-n}^{(2)})T) $$
with $T=\sigma_\mu(f)$. Then
\begin{align*}
 & \|P_n^{(1)}\circ\tr^{(2)}(\sigma_\mu(D_{-\frac{\nu}{2},k}f))\|_{HS}\\
 ={}& \sup_{\|S\|_{HS}=1} |\langle P_n^{(1)}\circ\tr^{(2)}(\sigma_\mu(D_{-\frac{\nu}{2},k}f)),S\rangle|\\
 \leq{}& |\mu|^k\sup_{\|S\|_{HS}=1} \sum_{N\geq n} |A(N,n)|\cdot|\langle\tr^{(2)}((P_n^{(1)}\otimes P_{N-n}^{(2)})T),S\rangle|\\
 ={}& |\mu|^k\sup_{\|S\|_{HS}=1} \sum_{N\geq n} |A(N,n)|\cdot|\langle(P_n^{(1)}\otimes P_{N-n}^{(2)})T,S\otimes\1\rangle|\\
 ={}& |\mu|^k\sup_{\|S\|_{HS}=1} \sum_{N\geq n} |A(N,n)|\cdot|\langle(P_n^{(1)}\otimes P_{N-n}^{(2)})T,(P_n^{(1)}S\otimes P_{N-n}^{(2)})\rangle|\\
 \leq{}& |\mu|^k\sup_{\|S\|_{HS}=1} \sum_{N\geq n} |A(N,n)|\cdot\|(P_n^{(1)}\otimes P_{N-n}^{(2)})T\|_{HS}\cdot\|P_n^{(1)}S\otimes P_{N-n}^{(2)}\|_{HS}\\
 ={}& |\mu|^k\sup_{\|S\|_{HS}=1} \sum_{N\geq n} |A(N,n)|\cdot\|(P_n^{(1)}\otimes P_{N-n}^{(2)})T\|_{HS}\cdot\|P_n^{(1)}S\|_{HS}\cdot\|P_{N-n}^{(2)}\|_{HS}\\
 ={}& |\mu|^k\sum_{N\geq n} |A(N,n)|\cdot\|(P_n^{(1)}\otimes P_{N-n}^{(2)})T\|_{HS}\cdot\|P_{N-n}^{(2)}\|_{HS}\\
 \leq{}& |\mu|^m\left(\sum_{N\geq n}\lambda_N(\nu)\|(P_n^{(1)}\otimes P_{N-n}^{(2)})T\|_{HS}^2\right)^{\frac{1}{2}} \left(\sum_{N\geq n}\frac{|A(N,n)|^2\|P_{N-n}^{(2)}\|_{HS}^2}{\lambda_N(\nu)}\right)^{\frac{1}{2}}.
\end{align*}
Now $\|P_{N-n}^{(2)}\|_{HS}^2=\dim\mathcal{P}_{N-n}^{(2)}$ and
$$ \dim\mathcal{P}_{N-n}^{(2)} = {d+N-n-1\choose d-1} \sim (N-n+1)^{d-1} $$
with $d=\dim_\bc\fv_2=\frac{p-p_1}{2}$. Further note that by Remark~\ref{rem:CSNormOnFTSide}:
$$ \lambda_N(\nu) \sim (N+1)^{\rho-\nu}, \qquad \lambda^\flat_n(\nu+2k) \sim (n+1)^{\rho_1-\nu-2k}. $$
For $N\geq n$ we can moreover estimate $|A(N,n)|$ by $(N+1)^k$. Using
 Lemma~\ref{lem:StandardEstimate} 
we can then estimate the second sum:
\begin{align*}
 \sum_{N\geq n}\frac{|A(N,n)|^2\|P_{N-n}^{(2)}\|_{HS}^2}{\lambda_N(\nu)} &\leq C\sum_{N\geq n}\frac{(N+1)^{2k}(N-n+1)^{d-1}}{(N+1)^{\rho-\nu}}\\
 &= C \sum_\ell \frac{(\ell+1)^{d-1}}{(\ell+n+1)^{\rho-\nu-2k}}\\
 &\leq \frac{C'}{(n+1)^{\rho-\nu-2k-d}} \leq \frac{C''}{\lambda^\flat_n(\nu+2k)}.
\end{align*}
Inserting this above yields
\begin{align*}
 & \sum_n \lambda_n^\flat(\nu+2k)\|P_n^{(1)}\tr^{(2)}(\sigma_\mu(D_{-\frac{\nu}{2},k}f))\|_{HS}^2\\
 \leq{}& C''|\mu|^{2k}\sum_n\sum_{N\geq n} \lambda_N(\nu)\|(P_n^{(1)}\otimes P_{N-n}^{(2)})T\|_{HS}^2\\
 ={}& C''|\mu|^{2k}\sum_N\lambda_N(\nu)\|P_NT\|_{HS}^2
\end{align*}
which shows the claim.
\end{proof}

\section{Application to rank one semisimple Lie groups}

In the case where $N$ resp. $N_1$ is the nilradical of a parabolic subgroup in a rank one semisimple Lie group $G$ resp. $G_1$ the representations $X_\nu$ resp. $X_\nu^\flat$ extend to irreducible unitary representations of $G$ resp. $G_1$, the complementary series. In this section we explain how the invariant differential operators $\mathcal{D}_{\nu,k}$ construct discrete components $X_{\nu+2k}^\flat$ in the restriction of $X_\nu$ to $G_1$.

\subsection{Real semisimple Lie groups of rank one}\label{sec:RankOneGroups}

The connected real semisimple Lie groups of rank one are up to covering given by
$$ SO_0(1,n+1), \qquad SU(1,n+1), \qquad Sp(1,n+1), \qquad F_{4(-20)}. $$
Let $G$ be one of the these groups and let $P=MAN$ be a parabolic subgroup of $G$. Then the nilpotent radical $N$ is an H-type group and $NA$ its harmonic extension. In fact, the Lie algebra $\fn$ of $N$ is of the following form:
$$ \fn \simeq \begin{cases} \br^n & \mbox{for $G=SO_0(1,n+1)$,} \\ \bc^n\oplus\im\bc & \mbox{for $G=SU(1,n+1)$,} \\ \bh^n\oplus\im\bh & \mbox{for $G=Sp(1,n+1)$,} \\ \bo\oplus\im\bo & \mbox{for $G=F_{4(-20)}$.} \end{cases} $$

We realize $G$ explicitly as the identity component of the group of $(n+2)\times(n+2)$ matrices over $\boldf\in\{\br,\bc,\bh,\bo\}$ which preserve the quadratic form
$$ \boldf^{n+2}\to\br, \quad x\mapsto |x_1|^2-|x_2|^2-\cdots-|x_{n+2}|^2. $$
Here $n\geq1$ for $\boldf\in\{\br,\bc,\bh\}$ and $n=1$ for $\boldf=\bo$. Let $K$ be the maximal compact subgroup of $G$ which stabilizes the line $\boldf e_1$. Choosing
$$ H := \left(\begin{array}{ccc}0&1&\\1&0&\\&&\1_n\end{array}\right) $$
we can define a parabolic subgroup $P=MAN\subseteq G$ by putting
$$ M:=Z_K(H), \quad A:=\exp(\br H) \quad \text{and} \quad N:=\exp(\fn), $$
where $\fn$ is the sum of eigenspaces of $\ad(H)$ in $\fg$ corresponding to negative eigenvalues, namely $-1$ and possibly $-2$. Then $\fn$ identifies with $\boldf^n\oplus\im\boldf$ via
$$ \boldf^n\oplus\im\boldf\to\fn, \quad (X,Z)\mapsto\left(\begin{array}{ccc}Z/2&Z/2&X\\-Z/2&-Z/2&-X\\X^*&X^*&\0_n\end{array}\right). $$

We embed subgroups $G_1$ of the form
\begin{equation}
 SO_0(1,m+1), \qquad SU(1,m+1), \qquad Sp(1,m+1), \qquad Spin(8,1),\label{eq:RankOneSubgroups}
\end{equation}
respectively, into $G$ as blocks of size $(m+2)\times(m+2)$ in the upper left corner, i.e.
$$ G_1\hookrightarrow G, \quad g\mapsto\left(\begin{array}{cc}g&\\&\1_{n-m}\end{array}\right). $$
Then $P_1:=P\cap G_1$ is a parabolic subgroup for $G_1$ of the same type. Write $P_1=M_1AN_1$ then $N_1$ is an H-type subgroup of $N$ and $N_1A$ its harmonic extension. In the respective cases the subalgebras $\fn_1$ are of the form
$$ \fn_1 \simeq \begin{cases} \br^m & \mbox{for $G_1=SO_0(1,m+1)$,} \\ \bc^m\oplus\im\bc & \mbox{for $G_1=SU(1,m+1)$,} \\ \bh^m\oplus\im\bh & \mbox{for $G_1=Sp(1,m+1)$,} \\ \im\bo & \mbox{for $G_1=Spin(8,1)$.} \end{cases} $$

\begin{remark}
Strictly speaking the subalgebra
$\fn_1=\im\bo\subseteq\bo\oplus\im\bo=\fn$ is not of H-type according
to our definition since $\fv_1=0$ in this case and hence
$[\fv_1,\fv_1]\neq\fz$. However, the case $\fv_1=0$ can with some
notational modifications also treated by our methods.
 But the only possible bounded differential operator constructed in this way 
is the restriction operator.
This corresponds to the branching law for complementary series of
$F_{4(-20)}$ restricted to $Spin(8,1)$ which was already investigated 
by the third author, see \cite[Theorem 4.4]{Zha11} for the corresponding statement. Therefore we will not treat the case $\fv_1=0$ in this paper and always assume that $[\fv_1,\fv_1]=\fz$.
\end{remark}

\subsection{Spherical principal series representations of rank one groups}

Denote by $\overline{P}=MA\overline{N}$ the parabolic opposite to $P$. The Lie algebra $\fa=\br H$ of $A$ is one-dimensional and $\ad(H)$ has eigenvalues $+1$ and possibly $+2$ on $\overline{\fn}$, the Lie algebra of $\overline{N}$. We identify $\fa_\bc^*\simeq\bc$ via $\nu\mapsto\nu(H)$. For $\nu\in\fa_\bc^*$ let $\tau_{\nu}$ be the induced representation $\Ind_{\overline{P}}^G(\1\otimes e^\nu\otimes\1)$ of $G$, acting by left-translation on
$$ I(\nu)=\{f\in C^\infty(G):f(gme^{tH}\overline{n})=e^{-\nu t}f(g)\,\forall\,g\in G,me^{tH}\overline{n}\in MA\overline{N}\}. $$
(Our representation $\tau_\nu$ is
$Ind_{MA\overline{N}}^G(e^{\nu-\rho})$ in the standard
notation. However, our parameter $\nu$ has some advantage
over $\nu-\rho$ when dealing with branching since $G$ and $G_1$
have different $\rho$'s: our parameter $\nu$ is ``stable'' under branching, see Theorem~\ref{thm:BranchingRankOne} below.) Restricting $f\in I(\nu)$ to $N$ gives an $NA$-equivariant map to $C^\infty(N)$ endowed with the representation $\pi_\nu$ of $NA$, i.e.
$$ (\tau_\nu(g)f)|_N = \pi_\nu(g)(f|_N) \qquad \forall\,f\in I(\nu), g\in NA. $$

The unitarizable representations for real $\nu$ are usually called complementary series and have first been found by Kostant~\cite{Kos69}. Since the form \eqref{eq:InvForm} is also $G$-invariant we can immediately read off from Theorem~\ref{thm:ComplSerForHType} that $\tau_\nu$ is unitarizable for $\nu$ in the following interval:
\begin{align*}
 G &= SO_0(1,n+1), && 0 <\nu < n,\\
 G &= SU(1,n+1), && 0 <\nu < 2n+2,\\
 G &= Sp(1,n+1), && 2 <\nu < 4n+4,\\
 G &= F_{4(-20)}, && 6 <\nu < 16.
\end{align*}
In this case the unitary representation $X_\nu$ of $NA$ extends to an irreducible unitary representation of $G$.

Similarly we have smooth induced representations $(\tau_\nu^\flat,I^\flat(\nu))$ and complementary series representations $X_\nu^\flat$ of the subgroups $G_1\subseteq G$ in \eqref{eq:RankOneSubgroups}.

\subsection{Invariant differential operators and discrete components}

Consider the restriction of the complementary series representations $X_\nu$ to the subgroup $G_1\subseteq G$.

\begin{theorem}\label{thm:BranchingRankOne}
For $G=\SO_0(1,n+1)$, $SU(1,n+1)$ or $Sp(1,n+1)$ and $G_1=SO_0(1,m+1)$, $SU(1,m+1)$ or $Sp(1,m+1)$, respectively, the irreducible unitary representation $X_\nu$ of $G$ contains the irreducible unitary representation $X_{\nu+2k}^\flat$ of $G_1$ as a direct summand if $\nu>0$, $k\in\bn_0$ with $\rho-\frac{p}{2}-1<\nu<\rho$ and $\rho_1-\frac{p_1}{2}-1<\nu+2k<\rho_1$. The $G_1$-equivariant unitary embedding $X_{\nu+2k}^\flat\hookrightarrow X_\nu$ is realized by the adjoint of the operator $\mathcal{D}_{\nu,k}:X_\nu\to X_{\nu+2k}^\flat$.
\end{theorem}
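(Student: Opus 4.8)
The plan is to isolate the one substantial point, namely that $\mathcal{D}_{\nu,k}$ is $G_1$-equivariant and not merely $S_1$-equivariant, and then to deduce the theorem from it by soft arguments. Granting $G_1$-equivariance: by Theorem~\ref{thm:Boundedness} the map $\mathcal{D}_{\nu,k}:X_\nu\to X_{\nu+2k}^\flat$ is a bounded intertwiner between unitary $G_1$-representations (here $X_\nu$ is regarded as a $G_1$-representation by restriction), so its Hilbert-space adjoint $\mathcal{D}_{\nu,k}^\ast:X_{\nu+2k}^\flat\to X_\nu$ is a bounded $G_1$-intertwiner as well. The composition $\mathcal{D}_{\nu,k}\circ\mathcal{D}_{\nu,k}^\ast$ is then a $G_1$-endomorphism of the irreducible unitary representation $X_{\nu+2k}^\flat$, hence equals $c\cdot\id$ for a scalar $c$ by Schur's lemma; since $\langle \mathcal{D}_{\nu,k}\mathcal{D}_{\nu,k}^\ast v,v\rangle=\|\mathcal{D}_{\nu,k}^\ast v\|^2\ge0$ we get $c\ge0$, with $c>0$ as soon as $\mathcal{D}_{\nu,k}\ne0$. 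This nonvanishing I would read off from the integral-kernel description following Proposition~\ref{prop:DiffOpIntertwining}: the operator $\mathcal{D}_{\nu,k}\circ\Lambda_{2\rho-\nu}$ has kernel $K_{-\nu/2-k}(n^{-1}n_1)|X_2|^{2k}$ (with $X_2$ the $\fv_2$-component of the integration variable $n$), which is not identically zero, while $\Lambda_{2\rho-\nu}$ is an isomorphism on the complementary range. Thus $c^{-1/2}\mathcal{D}_{\nu,k}^\ast$ is a $G_1$-equivariant isometry; its image is a closed $G_1$-invariant subspace isometric to $X_{\nu+2k}^\flat$, and by unitarity its orthogonal complement is again $G_1$-invariant, exhibiting $X_{\nu+2k}^\flat$ as a direct summand of $X_\nu|_{G_1}$ realized by $\mathcal{D}_{\nu,k}^\ast$.

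Everything therefore rests on upgrading the $S_1$-equivariance of Theorem~\ref{thm:IntertwiningDiffOps} to $G_1$-equivariance. Since $\fg_1$ is simple of rank one, $G_1$ is generated by $N_1$, $A$, $M_1$ and the opposite unipotent radical $\overline{N}_1$, and $\mathcal{D}_{\nu,k}$ already intertwines the $S_1=N_1A$-action; it thus suffices to verify equivariance under $M_1$ and under $\overline{N}_1$ (equivalently, under a representative of the nontrivial Weyl element of $G_1$, which lies in $K_1$). The $M_1$-part is routine: $M_1\subseteq M$ preserves the decomposition $\fv=\fv_1+\fv_2$, the center $\fz$ and all inner products, so it commutes with $\Delta_1$, $\Delta_2$, $\Box$ and with multiplication by $|X_2|^{2k}$, and it commutes with the restriction $R$; hence $\mathcal{D}_{\nu,k}$ is $M_1$-equivariant.

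The genuine obstacle is equivariance in the opposite direction $\overline{N}_1$, which is invisible to the $S_1$-action and which, in the noncompact model on $C^\infty(N_1)$, is implemented by the non-polynomial part of the $G_1$-action. Here I would exploit the factorization $\mathcal{D}_{\nu,k}=a(\nu)^{-1}R\circ\Lambda_{2\rho-\nu-2k}\circ M_k\circ\Lambda_\nu$ of Proposition~\ref{prop:DiffOpIntertwining}, together with the fact that in the semisimple case the Knapp--Stein operator $\Lambda_\nu$ is $G$-equivariant (this is precisely the $G$-invariance of the form~\eqref{eq:InvForm} recorded in Section~\ref{sec:RankOneGroups}) and that the corresponding operator for $G_1$ is $G_1$-equivariant. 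Conjugating the already-established $N_1A$- and $M_1$-equivariance by these $G$- and $G_1$-equivariant isomorphisms transports the $\overline{N}_1$-action to the $N_1$-side, where it is controlled; equivalently, one checks directly that the kernel $K_{-\nu/2-k}(n^{-1}n_1)|X_2|^{2k}$ of $\mathcal{D}_{\nu,k}\circ\Lambda_{2\rho-\nu}$ transforms under $G_1$ with the correct characters of the two principal series. Because all operators involved preserve smooth vectors and are meromorphic in $\nu$, infinitesimal $\overline{\fn}_1$-equivariance on the dense space of smooth vectors propagates to genuine $G_1$-equivariance of the bounded operator $\mathcal{D}_{\nu,k}$, after which the argument of the first paragraph applies. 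I expect the careful bookkeeping of the $\overline{\fn}_1$-action in the noncompact picture, and the matching of the shifted parameters $\nu$ and $\nu+2k$ through the two Knapp--Stein operators attached to different $\rho$'s, to be the technically demanding part of the proof.
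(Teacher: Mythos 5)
Your first paragraph is sound and is essentially the paper's own reduction: once $G_1$-equivariance of $\mathcal{D}_{\nu,k}$ is known, boundedness (Theorem~\ref{thm:Boundedness}) plus Schur's lemma applied on the irreducible $X_{\nu+2k}^\flat$ shows the adjoint is, after normalization, a $G_1$-equivariant isometric embedding; the paper states this more tersely but identically in substance. You also correctly locate the crux, namely upgrading the $S_1$-equivariance of Theorem~\ref{thm:IntertwiningDiffOps} to $G_1$-equivariance, and the correct tool, the factorization \eqref{eq:DiffOpAsConjugationOfMult}.

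Your treatment of that crux, however, has a genuine gap. The factorization reduces the problem to showing that the restriction operator $R$ and the multiplication operator $M_k$ are $G_1$-equivariant (the two Knapp--Stein factors are $G$-equivariant, hence $G_1$-equivariant); at that point $R$ and $M_k$ are still only known to be $S_1$-equivariant (the Lemma preceding Proposition~\ref{prop:DiffOpIntertwining}). Your proposed fix --- ``conjugating the already-established $N_1A$- and $M_1$-equivariance by these $G$- and $G_1$-equivariant isomorphisms transports the $\overline{N}_1$-action to the $N_1$-side'' --- cannot work: conjugating an operator by equivariant isomorphisms preserves whatever equivariance group the operator already has and never enlarges it, so the composite is still only known to be $S_1$-equivariant; the $\overline{N}_1$-problem is untouched. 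Your ``equivalently'' alternative (checking that the kernel $K_{-\nu/2-k}(n^{-1}n_1)|X_2|^{2k}$ transforms correctly under $G_1$) is a legitimate route, but it is precisely the nontrivial content of the theorem and you leave it unexecuted, explicitly deferring the $\overline{\fn}_1$-bookkeeping. The paper avoids this bookkeeping altogether by an idea absent from your proposal: it lifts $R$ and $M_k$ to the induced picture, defining $\tilde{R}:I(\beta)\to I^\flat(\beta)$, $\tilde{R}f=f|_{G_1}$, and $\tilde{M}_k:I(\beta)\to I(\beta-2k)$, $\tilde{M}_kf=q^kf$, where $q(g)=\frac{1}{4}\sum_{j=m+3}^{n+2}|g_{j,1}+g_{j,2}|^2$ is left-$G_1$-invariant, satisfies $q(gme^{tH}\overline{n})=e^{2t}q(g)$, and restricts to $|X_2|^2$ on $N$. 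In that picture $G_1$ acts by left translation, so $\tilde{R}$ and $\tilde{M}_k$ are manifestly $G_1$-equivariant --- including under $\overline{N}_1$ --- and the commutative diagrams transfer this to $R$ and $M_k$. This globalization, in particular the construction of the function $q$, is the missing step your argument needs.
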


\begin{proof}
By Theorem~\ref{thm:Boundedness} the operator $\mathcal{D}_{\nu,k}$ extends to a continuous linear operator $X_\nu\to X_{\nu+2k}^\flat$. It suffices to show that this operator is intertwining for the action of $G_1$. In fact, this implies that the adjoint $\mathcal{D}_{\nu,k}^*:X_{\nu+2k}^\flat\to X_\nu$ is intertwining, and since $X_{\nu+2k}^\flat$ is an irreducible unitary representation of of $G_1$ the operator $\mathcal{D}_{\nu,k}^*$ has to be a $G_1$-equivariant isometric embedding by Schur's Lemma.\\
To show that $\mathcal{D}_{\nu,k}:X_\nu\to X_{\nu+2k}^\flat$ is $G_1$-intertwining we use the identity \eqref{eq:DiffOpAsConjugationOfMult}. The Knapp--Stein intertwining operators $\Lambda_\nu$ and $\Lambda_{2\rho-\nu-2k}$ are intertwining for $G$ and hence for $G_1$, so it remains to show that the restriction operator $R$ and the multiplication operator $M_k$ are $G_1$-intertwining. For this consider the two operators
\begin{align*}
 \tilde{R}: {}&I(\beta)\to I^\flat(\beta), & \tilde{R}f(g_1) &= f(g_1),\\
 \tilde{M}_k: {}&I(\beta)\to I(\beta-2k), & \tilde{M}_kf(g) &= q(g)^k f(g),
\end{align*}
where the function $q\in C^\infty(G)$ is given by
$$ q(g)=\frac{1}{4}\left(|g_{m+3,1}+g_{m+3,2}|^2+\cdots+|g_{n+2,1}+g_{n+2,2}|^2\right), \qquad g\in G. $$
It is immediate from the definition of $I(\beta)$ that $\tilde{R}$ is defined and $G_1$-intertwining, i.e.
$$ \tilde{R}\circ\tau_\beta(g_1) = \tau_\beta^\flat(g_1)\circ\tilde{R} \qquad \forall\,g_1\in G_1. $$
Further, a short calculation shows that the function $q$ is left-invariant under $G_1$ and transforms under the right-action of $\overline{P}$ by $q(gme^{tH}\overline{n})=e^{2t}q(g)$. Therefore the operator $\tilde{M}_k$ is also defined and $G_1$-intertwining, i.e.
$$ \tilde{M}_k\circ\tau_\beta(g_1) = \tau_{\beta-2k}(g_1)\circ\tilde{M}_k \qquad \forall\,g_1\in G_1. $$
It is easy to see that $\tilde{R}$ and $\tilde{M}_k$ correspond to $R$ and $M_k$ in the sense that the following diagrams commute:
$$
\begin{xy}
\xymatrix{
 I(\beta) \ar[d]^{(\blank)|_N} \ar[r]^{\tilde{R}} & I^\flat(\beta) \ar[d]^{(\blank)|_{N_1}} && I(\beta) \ar[d]^{(\blank)|_N} \ar[r]^{\tilde{M}_k} & I(\beta-2k) \ar[d]^{(\blank)|_N}\\
 C^\infty(N) \ar[r]^{R} & C^\infty(N_1) && C^\infty(N) \ar[r]^{M_k} & C^\infty(N).
}
\end{xy}
$$
This implies that $R$ and $M_k$ are $G_1$-intertwining and the proof is complete.
\end{proof}

\appendix

\section{Jacobi polynomials}\label{app:JacobiPolys}

The classical Jacobi polynomials $P_n^{(\alpha,\beta)}(z)$ can be defined by (see \cite[equation 10.8~(12)]{EMOT53})
$$ P_n^{(\alpha,\beta)}(z) := \frac{1}{2^n}\sum_{m=0}^n{n+\alpha\choose m}{n+\beta\choose n-m}(x-1)^{n-m}(x+1)^m. $$
They satisfy the following parity identity (see \cite[equation 10.8~(13)]{EMOT53}):
$$ P_n^{(\alpha,\beta)}(-z) = (-1)^nP_n^{(\beta,\alpha)}(z). $$
The following recurrence relation in $n$ holds (see \cite[equation 10.8~(11)]{EMOT53}):
\begin{multline}
 (2n+\alpha+\beta+1)\left[(2n+\alpha+\beta)(2n+\alpha+\beta+2)z+\alpha^2-\beta^2\right]P_n^{(\alpha,\beta)}(z)\\
 = 2(n+\alpha)(n+\beta)(2n+\alpha+\beta+2)P_{n-1}^{(\alpha,\beta)}(z)+2(n+1)(n+\alpha+\beta+1)(2n+\alpha+\beta)P_{n+1}^{(\alpha,\beta)}(z).\label{eq:JacobiRecRel}
\end{multline}
We further have the following functional relation (see \cite[equation 10.8~(33)]{EMOT53}):
\begin{multline}
 (2n+\alpha+\beta+2)(1+z)P_n^{(\alpha,\beta+1)}(z)\\
 = 2(n+\beta+1)P_n^{(\alpha,\beta)}(z)+2(n+1)P_{n+1}^{(\alpha,\beta)}(z).\label{eq:JacobiSpecialKind}
\end{multline}
Identities \eqref{eq:JacobiRecRel} and \eqref{eq:JacobiSpecialKind} together yield
\begin{multline}
 2(n+\beta+1)\left[(2n+\alpha-\beta)+(2n+\alpha+\beta+2)z\right]P_n^{(\alpha,\beta)}(z)\\
 =(n+\alpha)(2n+\alpha+\beta+2)(1+z)^2P_{n-1}^{(\alpha,\beta+2)}(z)+4(n+1)(\beta+1)P_{n+1}^{(\alpha,\beta)}(z).\label{eq:NewIdentityJacobiPolynomials}
\end{multline}
We further remark that for $\alpha=-\frac{1}{2}$ or $\beta=-\frac{1}{2}$ the Jacobi polynomials degenerate to Gegenbauer polynomials (see \cite[equation 10.9~(21)]{EMOT53}):
\begin{equation}
 C_{2n}^\lambda(z) = \frac{(\lambda)_n}{(\frac{1}{2})_n}P_n^{(\lambda-\frac{1}{2},-\frac{1}{2})}(2z^2-1) = (-1)^n\frac{(\lambda)_n}{(\frac{1}{2})_n}P_n^{(-\frac{1}{2},\lambda-\frac{1}{2})}(1-2z^2).\label{eq:JacobiSpecializationGegenbauer}
\end{equation}

\bibliographystyle{amsplain}

\providecommand{\bysame}{\leavevmode\hbox to3em{\hrulefill}\thinspace}
\providecommand{\href}[2]{#2}

\end{document}